\newtheorem{remark}[theorem]{Remark}
\begin{document}

\bibliographystyle{plain}
\title{
Graphs with $2^n+6$ vertices and cyclic automorphism group of
order $2^n$}

\author{
Peteris\ Daugulis\thanks{Department of Mathematics, Daugavpils
University, Daugavpils, LV-5400, Latvia (peteris.daugulis@du.lv).
} }

\pagestyle{myheadings} \markboth{P.\ Daugulis}{Graphs with $2^n+6$
vertices and cyclic automorphism group of order $2^n$} \maketitle

\begin{abstract} The problem of finding upper bounds for minimal vertex
number of graphs with a given automorphism group is addressed in
this article for the case of cyclic $2$-groups. We show that for
any natural $n\ge 2$ there is an undirected graph having $2^n+6$
vertices and automorphism group cyclic of order $2^n$. This
confirms an upper bound claimed by other authors for minimal
number of vertices of undirected graphs having automorphism group
$\mathbb{Z}/2^n\mathbb{Z}$.

\end{abstract}

\begin{keywords}
graph, automorphism group
\end{keywords}
\begin{AMS}
05C25, 05E18, 05C35, 05C75.
\end{AMS}

\section{Introduction}\

Representation theories of groups can be divided into two
overlapping parts. There are \sl injective representation
theories\rm\ (such as permutation representations and linear
representations) which deal with injective homomorphisms $G
\rightarrow Aut(X)$, typical problems in this area are
classifications and properties of representations (modules). There
are also \sl bijective representation theories\rm\ (such as
Euclidean space isometry and graph automorphism theories) which
deal with bijective homomorphisms $G \rightarrow Aut(X)$ and study
full automorphism groups of certain objects. Examples of problems
in this latter area are problems about extremal parameter values
for objects having given automorphism groups.

This article addresses a problem in graph representation theory of
finite groups - finding undirected graphs with a given full
automorphism group and minimal num\-ber of vertices. All graphs in
this article are undirected and simple.

It is known that finite graphs universally represent finite
groups: for any finite group $G$ there is a finite graph
$\Gamma=(V,E)$ such that $Aut(\Gamma)\simeq G$, see Frucht
\cite{F}. The problem of finding graphs having a specific
automorphism group and minimal number of vertices or orbits is an
immediate problem in extremal combinatorics. It was proved by
Babai \cite{B1} constructively that for any finite group $G$
(except cyclic groups of order $3,4$ or $5$) there is a graph
$\Gamma$ such that $Aut(\Gamma)\simeq G$ and $|V(\Gamma)|\le 2|G|$
(there are $2$ $G$-orbits having $|G|$ vertices each). For certain
group types such as symmetric groups $\Sigma_{n}$, dihedral groups
$D_{n}$ and elementary abelian $2$-groups
$(\mathbb{Z}/2\mathbb{Z})^{n}$ graphs with smaller number of
vertices (respectively, $n$, $n$ and $2n$) are known. It is easy
to construct graphs with $9n$ vertices having automorphism group
$(\mathbb{Z}/3\mathbb{Z})^n$ and, more generally, graphs with
$2mn$ vertices having automorphism group
$(\mathbb{Z}/m\mathbb{Z})^n$ with $m\ge 6$. In the recent decades
the problem of finding
$\mu(G)=\min\limits_{\Gamma:Aut(\Gamma)\simeq G}|V(\Gamma)|$ for
specific groups $G$ does not seem to have been very popular.
See Babai \cite{B2} and
Cameron \cite{C}, for expositions of this area. Graphs with
abelian automorphism groups have been investigated in Arlinghaus
\cite{A}.


For $\mathbb{Z}/4\mathbb{Z}$ the Babai's bound for vertices is not
sharp: there are $10$-vertex graphs having automorphism group
$\mathbb{Z}/4\mathbb{Z}$, this fact is mentioned in Bouwer and
Frucht \cite{Bou} and Babai \cite{B1}, such graphs have been
recently described in Daugulis \cite{Da1}. It implies existence of
graphs with $10n$ vertices having automorphism groups
$(\mathbb{Z}/4\mathbb{Z})^{n}$.There are $12$ such $10$-vertex
graph isomorphism types, $6$ types up to complementarity. In these
cases there are $3$ orbits. For $\mathbb{Z}/6\mathbb{Z}$ the bound
is not sharp also - there are $11$-vertex graphs (even
disconnected) having this automorphism group: for example, if
graph $\Delta$ is such that $Aut(\Delta)\simeq
\mathbb{Z}/3\mathbb{Z}$ and $|V(\Delta)|=9$, then $Aut(\Delta \cup
K_{2})\simeq \mathbb{Z}/6\mathbb{Z}$.

In a textbook of Harary \cite{H} there is an exercise claiming
(referring to Merriwether) that if $G$ is a cyclic group of order
$2^n$, $n\ge 2$, then the minimal number of graph vertices is
$2^n+6$. In this paper we exhibit such graphs.

In this paper we show that the bound
$$\mu(G)=\min_{\Gamma:Aut(\Gamma)\simeq G} |V(\Gamma)|\le 2|G|$$
is not sharp for $G\simeq \mathbb{Z}/2^n\mathbb{Z}$, for any
natural $n\ge 2$. Namely, for any $n\ge 2$ there is an undirected
graph $\Gamma$ on $2^n+6$ vertices such that $Aut(\Gamma)\simeq
\mathbb{Z}/2^n\mathbb{Z}$. The number of orbits is $3$. In this
paper we are not concerned with minimization of number of edges.


We use standard notations of graph theory, see Diestel \cite{D}.
Adjacency of vertices $i$ and $j$ is denoted by $i\sim j$, an
undirected edge between $i\sim j$ is denoted by $(i,j)$. For a
graph $\Gamma=(V,E)$ the subgraph induced by $X\subseteq V$ is
denoted by $\Gamma[X]$: $\Gamma[X]=\Gamma-\overline{X}$. The set
$\{1,2,...,n\}$ is denoted by $V_{n}$ and the undirected cycle on
$n$ vertices is denoted by $C_{n}$. The cycle notation is used for
permutations.

\section{Main result}\

\subsection{The graph $\Gamma_{m}$}\

\begin{definition} Let $n\ge 2$, $n\in \mathbb{N}$, $m=2^n$. Let
$\Gamma_{m}=(V_{m+6},E_{m+6})$ be the graph given by the following
adjacency description. We define $8$ types of edges.

\begin{enumerate}

\item $i\sim i+1$ for all $i\in V_{m-1}$ and $1\sim m$. (It
indicates that $\Gamma_{m}[1,2,...,m]\simeq C_{m}$).

\item $m+1\sim i$ with $i\in V_{m}$ iff $i\equiv 1\ or\ 2(mod \
4)$.

\item $m+2\sim i$ with $i\in V_{m}$ iff $i\equiv 2\ or\ 3(mod \
4)$.

\item $m+3\sim i$ with $i\in V_{m}$ iff $i\equiv 3\ or\ 0(mod \
4)$.

\item $m+4\sim i$ with $i\in V_{m}$ iff $i\equiv 0\ or\ 1(mod \
4)$.

\item $m+5\sim i$ with $i\in V_{m}$ iff $i\equiv  1(mod \ 2)$.

\item $m+6\sim i$ with $i\in V_{m}$ iff $i\equiv  0(mod \ 2)$.

\item $m+1\sim m+5\sim m+3$, $m+2\sim m+6\sim m+4$.

\item There are no other edges.

\end{enumerate}

\end{definition}

\begin{definition}
Denote $O_{1}=\{1,2,...,m\}$, $O_{2}=\{m+1, m+2, m+3, m+4\}$,
$O_{3}=\{m+5, m+6\}$.

\end{definition}

\begin{remark}

$\Gamma_{m}$ has $4m+4$ edges. Its degree sequence is
$5^{m}(\frac{m}{2}+1)^{4}(\frac{m}{2}+2)^2$:

\begin{enumerate}
\item vertices in $O_{1}$ have degree $5$,

\item vertices in $O_{2}$ have degree $\frac{m}{2}+1$,

\item vertices in $O_{3}$ have degree $\frac{m}{2}+2$.

\end{enumerate}

\end{remark}


\begin{remark}

 We can visualize $\Gamma_{m}$ in the following way:
\begin{enumerate}
\item $\Gamma_{m}[O_{1}]$ is a cycle which is drawn in a central
plane $P$.

\item $\Gamma_m[O_{1},m+1,m+3,m+5]$ and
$\Gamma_{m}[O_{1},m+2,m+4,m+6]$ are drawn above and below $P$.
\end{enumerate}

\end{remark}
\subsection{Special cases}

\subsubsection{$n=2$}\

The graph with automorphism group $\mathbb{Z}/4\mathbb{Z}$ and
minimal number of vertices ($10$) and edges ($18$) has been
exhibited in Bouwer and Frucht \cite{Bou}, p.$58$. $\Gamma_{4}$
has been recently described in Daugulis \cite{Da1}. It is shown on
Fig.1. It can be thought as embedded in the $3D$ space, it is
planar but a plane embedding is not given here.
$Aut(\Gamma_{4})\simeq \mathbb{Z}/4\mathbb{Z}$ and it is generated
by the vertex permutation
$$g=(1,2,3,4)(5,6,7,8)(9,10).$$ There are $3$
$Aut(\Gamma_{4})$-orbits - $\{1,2,3,4\}$, $\{5,6,7,8\}$,
$\{9,10\}$.

Subgraphs $\Gamma_{4}[1,2,3,4,5,7,9]$ and
$\Gamma_{4}[1,2,3,4,6,8,10]$ which can be thought as being drawn
above and below the orbit $\{1,2,3,4\}$ are interchanged by $g$.

$$
\xymatrix@R=0.3pc@C=0.3pc{
&&&5\ar@{-}[rrrrd]\ar@{-}[llldddddd]\ar@{-}[rrrdddd]&&&&&&&&&&&\\
&&&&&&&9\ar@{-}[lddd]\ar@{-}[rrrrd]\ar@{-}[rddddddd]&&&&&&&\\
&&&&&&&&&&&7\ar@{-}[rrrdddd]\ar@{-}[llldddddd]&&&\\
&&&&&&&&&&&&&&\\
&&&&&&2\ar@{-}[lllllldd]\ar@{-}[rrrrddddd]\ar@{-}[rrrrrrrrdd]&&&&&&&&\\
&&&&&&&&&&&&&&\\
1\ar@{-}[rrrrrrrrdd]\ar@{-}[rrrrrrrdddd]\ar@{-}[rrrrddddd]&&&&&&&&&&&&&&3\ar@{-}[lllllldd]\ar@{-}[llllddd]\ar@{-}[llllllldddd]\\
&&&&&&&&&&&&&&\\
&&&&&&&&4\ar@{-}[llllddd]&&&&&&\\
&&&&&&&&&&6\ar@{-}[llld]&&&&\\
&&&&&&&10\ar@{-}[llld]&&&&&&&\\
&&&&8&&&&&&&&&&\\
}
$$
\begin{center}

Fig.1.  - $\Gamma_{4}$
    \end{center}

\subsubsection{$n=3$}\



$\Gamma_{8}$ has been described in Daugulis \cite{Da2}. It is
cumbersome to depict so that its structure and automorphisms are
visible. We give descriptions of its three induced subgraphs
$\Gamma_{a}$, $\Gamma_{b}$, $\Gamma_{c}$.

\begin{center}

$$
\xymatrix@R=0.9pc@C=0.9pc{
&&&&&9\ar@{-}[dddddlll]\ar@{-}[dddl]\ar@{-}[dddddl]\ar@{-}[dddr]&&&\\
&&&&&&&&\\
&&&&&&&&\\
&&7\ar@{-}[rr]\ar@{-}[dll]&&6\ar@{-}[rr]&&5\ar@{-}[drr]&&\\
8\ar@{-}[drr]&&&&&&&&4\ar@{-}[dll]\\
&&1\ar@{-}[rr]&&2\ar@{-}[rr]&&3&&\\
&&&&&&&&\\
&&&&&&&&\\
&&&10\ar@{-}[uuuuul]\ar@{-}[uuur]\ar@{-}[uuuuur]\ar@{-}[uuurrr]&&&&&\\
}
$$

Fig.2.  - $\Gamma_a=\Gamma_{8}[1,2,..,8,9,10]$.
    \end{center}

$$
\xymatrix@R=0.9pc@C=0.9pc{
&&&&&11\ar@{-}[llllldddd]\ar@{-}[lllddd]\ar@{-}[rddddd]\ar@{-}[rrrdddd]&&&\\
&&&&&&&&\\
&&&&&&&&\\
&&7\ar@{-}[rr]\ar@{-}[dll]&&6\ar@{-}[rr]&&5\ar@{-}[drr]&&\\
8\ar@{-}[drr]&&&&&&&&4\ar@{-}[dll]\\
&&1\ar@{-}[rr]&&2\ar@{-}[rr]&&3&&\\
&&&&&&&&\\
&&&&&&&&\\
&&&12\ar@{-}[llluuuu]\ar@{-}[luuu]\ar@{-}[rrruuuuu]\ar@{-}[rrrrruuuu]&&&&&\\
}
$$

\begin{center}
Fig.3.  - $\Gamma_{b}=\Gamma_{8}[1,2,..,8,11,12]$.
    \end{center}

$$
\xymatrix@R=0.9pc@C=0.9pc{
&&&&&14\ar@{-}[ddddlllll]\ar@{-}[dddl]\ar@{-}[dddddl]\ar@{-}[ddddrrr]&&&\\
&&&&&&&&\\
&&&&&&&&\\
&&7\ar@{-}[rr]\ar@{-}[dll]&&6\ar@{-}[rr]&&5\ar@{-}[drr]&&\\
8\ar@{-}[drr]&&&&&&&&4\ar@{-}[dll]\\
&&1\ar@{-}[rr]&&2\ar@{-}[rr]&&3&&\\
&&&&&&&&\\
&&&&&&&&\\
&&&13\ar@{-}[uuul]\ar@{-}[uuuuul]\ar@{-}[uuurrr]\ar@{-}[uuuuurrr]&&&&&\\
}
$$

\begin{center}

Fig.4.  - $\Gamma_{c}=\Gamma_{8}[1,2,..,8,13,14]$.
    \end{center}

$Aut(\Gamma_{8})\simeq \mathbb{Z}/8\mathbb{Z}$ and it is generated
by the vertex permutation
$$g=(1,2,3,4,5,6,7,8)(9,10,11,12)(13,14).$$

Subgraphs $\Gamma_{8}[1,2,..,8,9,11,13]$ and
$\Gamma_{8}[1,2,..,8,10,12,14]$ which can be thought as being
drawn above and below the orbit $\{1,2,..,8\}$ are interchanged
and rotated by $g$. In particular, the permutation
$$g'=(1,2,3,4,5,6,7,8)(13,14)$$ is an automorphism of $\Gamma_{c}$.

There are $3$ $Aut(\Gamma_{8})$-orbits: $\{1,2,3,4,5,6,7,8\}$,
$\{9,10,11,12\}$, $\{13,14\}$.

\subsection{Automorphism group of $\Gamma_{m}$}\

\begin{proposition} Let $n\ge 2$, $n\in \mathbb{N}$, $m=2^n$. Let
$\Gamma_{m}$ be as defined above.

For any $n$ $Aut(\Gamma_{m})\simeq \mathbb{Z}/m\mathbb{Z}$.

\end{proposition}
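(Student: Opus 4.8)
The plan is to establish the two inclusions $\mathbb{Z}/m\mathbb{Z}\hookrightarrow Aut(\Gamma_m)$ and $|Aut(\Gamma_m)|\le m$ separately. For the first I would exhibit the explicit permutation
$$g=(1,2,\dots,m)(m+1,m+2,m+3,m+4)(m+5,m+6),$$
whose instances for $m=4$ and $m=8$ are exactly the generators displayed in the special cases above, and verify directly that $g$ preserves each of the eight edge types. This verification rests on the single arithmetic fact that $4\mid m$ for $n\ge 2$: rotating $O_1$ by one step shifts every residue class $\pmod 4$ by one, which matches both the cyclic relabelling $m+1\mapsto m+2\mapsto m+3\mapsto m+4\mapsto m+1$ of $O_2$ and the swap $m+5\leftrightarrow m+6$ of $O_3$, and for the same reason the type-8 edges are permuted among themselves. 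Since $g$ has order $\mathrm{lcm}(m,4,2)=m$, this gives $\langle g\rangle\simeq\mathbb{Z}/m\mathbb{Z}\le Aut(\Gamma_m)$.

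For the reverse inequality let $\phi\in Aut(\Gamma_m)$. First I would show $\phi$ fixes $O_1,O_2,O_3$ setwise. By the degree sequence in the Remark this is immediate whenever the three degrees $5,\frac m2+1,\frac m2+2$ are distinct, i.e. for all $n\neq 3$; in the degenerate case $m=8$ the sets $O_1$ and $O_2$ share the degree $5$, and I would separate them by a finer invariant such as the number of triangles through a vertex (one computes $3$ for a vertex of $O_1$ and $4$ for a vertex of $O_2$). Granting this, $\sigma:=\phi|_{O_1}$ is an automorphism of $\Gamma_m[O_1]=C_m$, hence lies in the dihedral group $D_m$. Moreover the six vertices of $O_2\cup O_3$ have pairwise distinct neighbourhoods inside $O_1$ (four consecutive residue pairs and two parity classes), so $\phi|_{O_2\cup O_3}$ is completely determined by $\sigma$: each $m+j$ must map to the unique vertex whose $O_1$-neighbourhood equals $\sigma\bigl(N(m+j)\cap O_1\bigr)$.

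It remains to decide which $\sigma\in D_m$ extend. Here I would reduce modulo $4$: since $4\mid m$, every $\sigma\in D_m$ descends to an affine bijection $\bar\sigma$ of $\mathbb{Z}/4\mathbb{Z}$, and because the linear part $\pm1$ survives the reduction ($-1\not\equiv 1\pmod 4$), $\bar\sigma$ is a rotation exactly when $\sigma$ is a rotation of $C_m$ and a reflection exactly when $\sigma$ is a reflection. Under this reduction the four $O_2$-vertices correspond to the consecutive residue pairs $\{1,2\},\{2,3\},\{3,0\},\{0,1\}$ (the edges of the residue $4$-cycle) and the two $O_3$-vertices to the diagonals $\{1,3\},\{0,2\}$, while the type-8 edges record that the diagonal $\{1,3\}$ goes with the perfect matching $\{\{1,2\},\{3,0\}\}$ and $\{0,2\}$ with $\{\{0,1\},\{2,3\}\}$. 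The crucial observation is that $\phi$ preserves the type-8 edges if and only if $\bar\sigma$ preserves this diagonal--matching correspondence, and the subgroup of $D_4$ doing so is exactly the rotation subgroup $\mathbb{Z}/4\mathbb{Z}$: a reflection fixes or swaps the two diagonals oppositely to how it swaps the two matchings. Hence $\bar\sigma$, and therefore $\sigma$, is a rotation; conversely each of the $m$ rotations extends, the extension being the one forced above. Thus $Aut(\Gamma_m)$ consists of exactly $m$ elements and equals $\langle g\rangle\simeq\mathbb{Z}/m\mathbb{Z}$.

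I expect the main obstacle to be the third paragraph: isolating the precise role of the type-8 edges and proving that they admit exactly the rotations and not the reflections of $C_m$. The degree reduction and the forced-extension bookkeeping are routine, but the reflection-exclusion is where the whole rigidity of the construction is concentrated; the degeneracy at $m=8$, where degrees alone fail to separate $O_1$ from $O_2$, is a minor but genuine extra case that must be handled by the triangle count.
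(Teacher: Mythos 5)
Your proposal is correct, and while Step 1 (exhibiting $g$ and checking the eight edge types) coincides with the paper's, your proof of the reverse inclusion takes a genuinely different route. The paper pins down $O_3$ by eccentricity and $O_1$, $O_2$ by degree (for $n\ne 3$), and then argues \emph{locally}: writing $f(1)=k$, it shows via the induced ``diamond'' subgraph $\Gamma_0=\Gamma_m[1,2,m+1,m+5]$ that the only induced subgraph of that isomorphism type containing $k$, a cycle-neighbour of $k$ and an $O_3$-vertex forces $f(2)\equiv k+1$, and it propagates this around the cycle to get $f|_{O_1}=g^{k-1}$, finishing $O_2$ by edge inspection; the case $n=3$ is then handled as a separate, only sketched, parallel argument with the subgraph $\Gamma_8[1,2,9,13]$. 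You instead argue \emph{globally}: after orbit separation, $\phi|_{O_1}$ lies in $D_m$, the six pairwise distinct $O_1$-neighbourhoods force the rest of $\phi$, and the reduction mod $4$ shows that the type-8 edges (the diagonal--matching pairing on the residue square) admit exactly the rotations and kill every reflection --- your computation that reflections either fix the diagonals and swap the matchings or vice versa is right, and so are the triangle counts ($3$ per $O_1$-vertex versus $4$ per $O_2$-vertex at $m=8$) that you use in place of the paper's eccentricity/degree bookkeeping. What your approach buys: it is uniform in $n$ once the orbits are separated, so the awkward $m=8$ degeneracy shrinks to a one-line invariant check rather than a second sketched case, and it isolates exactly where the chirality of the construction lives (both proofs hinge on excluding reflections, but yours names the mechanism). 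What the paper's approach buys: it is entirely elementary and local, needing no identification of $\mathrm{Aut}(C_m)$ with $D_m$ or affine reduction, at the cost of a case analysis mod $4$ and an induction around the cycle that it leaves partly to the reader.
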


\begin{proof}

We will show that $Aut(\Gamma_{m})=\langle g \rangle$ where

$$g=(1,2,...,m)(m+1,m+2,m+3,m+4)(m+5,m+6).$$

Since $g$ has order $m$, this will prove the statement.

\paragraph{Step 1: Inclusion $\langle g \rangle \le Aut(\Gamma_{m})$}\

We have to show that $g$ maps an edge of each type to an edge.
Addition of indices involving $i\in V_{m}$ is meant mod $m$.

\begin{enumerate}

\item The edge of type $1$ $(i,i+1)$ is mapped by $g$ to the edge
$(i+1,i+2)$.

\item $g(m+1)=m+2$, if $i\equiv 1\ or\ 2(mod\ 4)$, then
$g(i)\equiv 2\ or\ 3(mod\ 4)$. Any edge of type $2$ $(m+1,i)$ is
mapped by $g$ to the edge of type $3$ $(m+2,i+1)$.

\item $g(m+2)=m+3$, if $i\equiv 2\ or\ 3(mod\ 4)$, then
$g(i)\equiv 3\ or\ 0(mod\ 4)$. Any edge of type $3$ $(m+2,i)$ is
mapped by $g$ to the edge of type $4$ $(m+3,i+1)$.

\item $g(m+3)=m+4$, if $i\equiv 3\ or\ 0(mod\ 4)$, then
$g(i)\equiv 0\ or\ 1(mod\ 4)$. Any edge of type $4$ $(m+3,i)$ is
mapped by $g$ to the edge of type $5$ $(m+4,i+1)$.

\item $g(m+4)=m+1$, if $i\equiv 0\ or\ 1(mod\ 4)$, then
$g(i)\equiv 1\ or\ 2(mod\ 4)$. Any edge of type $5$ $(m+4,i)$ is
mapped by $g$ to the edge of type $2$ $(m+1,i+1)$.

\item $g(m+5)=m+6$, if $i\equiv 1(mod\ 2)$, then $g(i)\equiv 0
(mod\ 2)$. Any edge of type $6$ $(m+5,i)$ is mapped by $g$ to the
edge of type $7$ $(m+6,i+1)$.

\item $g(m+6)=m+5$, if $i\equiv 0(mod\ 2)$, then $g(i)\equiv 1
(mod\ 2)$. Any edge of type $7$ $(m+6,i)$ is mapped by $g$ to the
edge of type $6$ $(m+5,i+1)$.

\item $g$ changes parity of each of vertices $m+1,...,m+6$,
therefore any edge of type $8$ is mapped by $g$ to an edge of type
$8$.

\end{enumerate}

Thus $g\in Aut(\Gamma_{m})$ and the inclusion is proved.

%

\paragraph{Step 2: Inclusion $Aut(\Gamma_{m})\le \langle g \rangle$}\

Let $f\in Aut(\Gamma_{m})$. We will show that $f=g^{\alpha}$ for
some unique $\alpha (mod\ m)$. This will imply that $f\in \langle
g \rangle$. There are two subcases: $n\ne 3$ and $n=3$.

For any $n\ge 2$ the vertices $m+5$ and $m+6$ are the only
vertices having eccentricity $3$, so they must be permuted by $f$.

Let $n\ne 3$. Suppose $f(1)=k$. Since $n\ne 3$, we have that
$deg(1)=5$, $deg(v)=\frac{m}{2}+1\ne 5$ for any $v\in O_{2}$,
therefore $f(1)\in O_{1}$. Moreover, $f$ permutes both $O_{1}$ and
$O_{2}$. Consider the $f$-image of the edge $(1, m+5)$.
$(f(1),f(m+5))=(k,f(m+5))$ must be an edge, therefore

\begin{enumerate}

\item if $k\equiv 1(mod\ 2)$, then $f(m+5)=m+5$,

\item if $k\equiv 0 (mod\ 2)$, then $f(m+5)=m+6$.

\end{enumerate}

It follows that $f|_{O_{3}}=g^{k-1}$.

Consider the $f$-image of $\Gamma_{m}[1,2,m+1,m+5]$, denote its
isomorphism type by $\Gamma_{0}$, see Fig.5.

$$
\xymatrix@R=0.9pc@C=0.9pc{
m+5\ar@{-}[dr]\ar@{-}[dd]&&\\
&m+1\ar@{-}[dl]\ar@{-}[dr]&\\
1\ar@{-}[rr]&&2\\
}
$$

\begin{center}

Fig.5.  - $\Gamma_{0}\simeq\Gamma_{m}[1,2,m+1,m+5]$.
    \end{center}

Vertex $2$ must be mapped to a $\Gamma_{m}[O_{1}]$-neighbour of
$k$. For any $k\in O_{1}$ there are two triangles containing the
vertex $k$ and a vertex adjacent to $k$ in $\Gamma_{m}[O_{1}]$.
Taking into account that $f(m+5)\in O_{3}$ we can check that there
is only one suitable induced $\Gamma_{m}$-subgraph containing $k$,
another vertex in $O_{1}$ adjacent to $k$ and a vertex in $O_{3}$
which is isomorphic to $\Gamma_{m}[1,2,m+1,m+5]$:

\begin{enumerate}

\item if $k\equiv 1 (mod\ 4)$, then
$\Gamma_{m}[k,k+1,m+1,m+5]\simeq \Gamma_{0}$;

\item if $k\equiv 2 (mod\ 4)$, then
$\Gamma_{m}[k,k+1,m+2,m+6]\simeq \Gamma_{0}$;

\item if $k\equiv 3 (mod\ 4)$, then
$\Gamma_{m}[k,k+1,m+3,m+5]\simeq \Gamma_{0}$;

\item if $k\equiv 0 (mod\ 4)$, then
$\Gamma_{m}[k,k+1,m+4,m+6]\simeq \Gamma_{0}$.

\end{enumerate}

(Indices involving $k$ are meant mod $m$, $k\in V_{m}$).

It follows that in each case we must have $f(2)\equiv k+1 (mod\
m)$. By similar arguments for all $j\in \{1,2,..,m\}$ it is proved
that $f(j)\equiv (k-1)+j (mod\ m)$, thus $f|_{O_{1}}=g^{k-1}$.

Finally we describe $f|_{O_{2}}$. It can also be found considering
$\Gamma_{m}$-subgraphs isomorphic to $\Gamma_{0}$, but we will use
edge inspection. Consider the $f$-images of the edges $(1,m+1)$
and $(1,m+4)$. Vertex pairs $(f(1),f(m+1))=(k,f(m+1))$ and
$(f(1),f(m+4))$
 must be edges, therefore we can deduce images of all $O_{2}$
 vertices.
 \begin{enumerate}

\item if $k\equiv 1 (mod\ 4)$, then $f(m+1)=m+1$, $f(m+3)=m+3$,
$f(m+2)=m+2$,$f(m+4)=m+4$ hence $f|_{O_{2}}=id=g^{0}=g^{k-1}$,

\item if $k\equiv 2 (mod\ 4)$, then $f(m+1)=m+2$, $f(m+3)=m+4$,
$f(m+4)=m+1$, $f(m+2)=m+3$ hence $f|_{O_{2}}=g=g^{k-1}$,

\item if $k\equiv 3 (mod\ 4)$, then $f(m+1)=m+3$, $f(m+3)=m+1$,
$f(m+4)=m+2$, $f(m+2)=m+4$ hence $f|_{O_{2}}=g^2=g^{k-1}$,

\item if $k\equiv 0 (mod\ 4)$, then $f(m+1)=m+4$, $f(m+3)=m+2$,
$f(m+2)=m+1$, $f(m+4)=m+3$, hence $f|_{O_{2}}=g^{3}=g^{k-1}$,

 \end{enumerate}

Thus if $n\ne 3$ and $f(1)=k$, then $f=g^{k-1}$, therefore $f\in
\langle g \rangle$.

In the special case $n=3$ we also consider $f$-images of
$\Gamma_{8}[1,2,9,13]$ and find suitable $\Gamma_{8}$-subgraphs
isomorphic to $\Gamma_{0}$. It is shown similarly to the above
argument that $f$ is uniquely determined and $f\in \langle g
\rangle$.

%
%
%
%

\end{proof}

%
%
%
%
%
%
%
%
%

\subsection{Other graphs}

Other $m+6$-vertex graphs with cyclic automorphism group
$\mathbb{Z}/m\mathbb{Z}$ can be obtained starting from
$\Gamma_{m}$ and adding or removing edges in the orbit induced
graphs $\Gamma_{m}[O_{i}]$, complementing orbit subgraphs and
edges between orbits.

\subsection{Conclusion} For a finite group $G$ the Babai's construction
requires $2|G|$ vertices for a graph to have automorphism group
isomorphic to $G$, two orbits having $|G|$ elements each. Except
for some small groups and series such as symmetric groups the
exact minimal number of vertices for undirected (and directed)
graphs remains an unsolved problem. We have shown that if $G$ is
cyclic of order $2^n$, then $\mu(G)\le |G|+6$. Some computations
(not described in this article) indicate that for $|G|=8$ this
bound is exact. We note that $\lim\limits_{n\rightarrow
\infty}\frac{|V_{m}|}{|Aut(\Gamma_{m})|}=1$.

Thus the Babai's bound for the minimal number of vertices for
undirected graphs with a given automorphism group is improved in
the case of cyclic $2$-groups. This result implies related
consequences for finite undirected graphs having abelian
automorphism groups of order divisible by $2$.

\section*{Acknowledgement} Computations were performed using the
computational algebra system MAGMA, see Bosma et al. \cite{B3},
and the program $nauty$ made public by Brendan McKay, available at
$http://cs.anu.edu.au/~bdm/data/$, see McKay and Piperno \cite{M}.


\end{document}